\documentclass[11pt]{article}
\usepackage[top=1in, bottom=1in, left=1in,right=1in]{geometry}
\usepackage{amsmath}
\usepackage{amssymb}
\usepackage{titlesec}
\usepackage{color}
\usepackage{amsthm}
\usepackage{mathrsfs}   
\usepackage{theoremref}
\usepackage{stix}
\usepackage{hyperref}
\usepackage{graphicx}

\usepackage{biblatex}
\renewbibmacro{in:}{} 

\usepackage[all]{xy}

\newtheorem{defn}{Definition}[section]
\newtheorem{thm}[defn]{Theorem}
\newtheorem*{thm*}{\textsc{Theorem}}
\newtheorem{lem}[defn]{Lemma}  
\newtheorem{prop}[defn]{Proposition}

\theoremstyle{definition}
\newtheorem*{rmk}{Remark}
\theoremstyle{definition}

\title{An Exact Counting Formula for the Mutual Position of Two Plane Conics}
\author{Tianhao Wang\thanks{The author was supported in part by NSF Grant DMS-2154223.}\thanks{Email: \texttt{tianhw11@uci.edu}}}
\date{\today}

\begin{document}
\maketitle
\begin{abstract}
Let $q$ be an odd prime power, and $\mathcal{C}, \mathcal{D}$ be two smooth plane conics defined over $\mathbb{F}_q$ with transversal intersection. We present an exact formula for the number of points in $\mathbb{P}^2(\mathbb{F}_q)$ that are internal/external to $\mathcal{C}$ and internal/external to $\mathcal{D}$. This refines the $\frac{q^2}{4}+O(q^{3/2})$ asymptotic estimate for this quantity due to Asgarli and Yip \cite[Theorem 1.2]{Asgarli}.  In particular, we show that the error term is of size at most $q+\sqrt{q} + 1$. By studying the geometry of the incidence variety related to this problem, we link the exact point counts directly to the Frobenius traces of two associated elliptic curves, and the number of $\mathbb{F}_q$-rational intersection points of $\mathcal{C}$ and $\mathcal{D}$ and of the corresponding dual conics $\mathcal{C}^*$ and $\mathcal{D}^*$. Lastly, We provide a remark explaining the challenges in generalizing this method to the study of higher-dimensional quadrics.
\end{abstract}


\section{Introduction}
Let $q$ be an odd prime power and $\mathcal{C}, \mathcal{D} \subset \mathbb{P}^2$ be two smooth plane conics defined over $\mathbb{F}_q$ with transversal intersection. Following classical geometric terminology over finite fields, the configuration of an external or internal point relative to a smooth conic is characterized by the rationality of its tangent lines.

\begin{defn}
Let \(P \in \mathbb P^2(\mathbb F_q)\setminus \mathcal C\). Over $\overline{\mathbb{F}_q}$, there are two distinct tangent lines of $\mathcal{C}$ passing through $P$. There are two possibilities: 
\begin{itemize}
  \item We say that \(P\) is \textbf{external} to \(\mathcal C\) if those two tangent lines are defined over \(\mathbb F_q\).
  \item We say that \(P\) is \textbf{internal} to \(\mathcal C\) if those two tangent lines are Galois-conjugate lines defined over \(\mathbb F_{q^2}\), but not over $\mathbb{F}_q$. 
\end{itemize}
\end{defn}

A problem originally posed by Korchmáros \cite[Problem 6.2]{Korchmros} asks for the number of points in the projective plane that occupy specific mutual positions relative to a pair of conics. Motivated by this query, Asgarli and Yip \cite[Theorem 1.2]{Asgarli} proved that the number of points in $\mathbb{P}^2(\mathbb{F}_q)$ that are simultaneously external to $\mathcal{C}$ and internal to $\mathcal{D}$ is given by 
\[
\frac{q^2}{4} + O\left(q^{3/2}\right).
\]
More generally, they established analogous asymptotic estimates for point distributions relative to pairs of smooth quadric hypersurfaces in higher dimensions, using asymptotic estimates for multiplicative character sums. They \cite[Theorem 1.3]{Asgarli} proved that for an even integer $n\geq 2$, the number of points in $\mathbb{P}^{n}(\mathbb{F}_q)$ that are internal/external to $\mathcal{C}$ and internal/external to $\mathcal{D}$ is given by
\[
\frac{q^{n}}{4} + O\left(q^{n-1/2}\right).
\]

Shortly after, Slavov \cite{Slavov} reinterpreted this setup through the joint distribution of square values of several polynomials over a finite field. By treating the condition of being external/internal as the evaluation of squares or non-squares of the defining quadratic forms, Slavov applied the Lang–Weil bound to a corresponding variety. This geometric approach provides a simpler proof of the Asgarli–Yip bounds, and also improved the explicit constants in the error term.

In this paper, we will give an exact formula for the number of $\mathbb{F}_q$-rational points that are internal/external to $\mathcal{C}$ and internal/external to $\mathcal{D}$. To state our main result, let $C$ and $D$ be the symmetric matrix representations of the plane conics $\mathcal{C}$ and $\mathcal{D}$. We introduce two elliptic curves associated to the conic pair and their duals:
\[
E_1: y^2 = \det(D)\det(xC + D), \quad \text{and} \quad E_2: y^2 = \det(C)\det(xD + C).
\]
Let $t_1$ and $t_2$ denote the Frobenius traces of $E_1$ and $E_2$ respectively. Furthermore, let $b_1$ be the number of $\mathbb{F}_q$-rational intersection points of $\mathcal{C}$ and $\mathcal{D}$, and let $b_2$ be the number of $\mathbb{F}_q$-rational intersection points of the dual conics $\mathcal{C}^*$ and $\mathcal{D}^*$. Our main result is formulated as follows:

\begin{thm*}[Theorem \ref{thm: main_conic_case}]
Suppose that $q$ is an odd prime power, and $\mathcal{C}$, $\mathcal{D}$ are two smooth plane conics with transversal intersection. Then, the number of points in $\mathbb{P}^2(\mathbb{F}_q)$ external to $\mathcal{C}$ and internal to $\mathcal{D}$ is given by
\[
 N_{\mathrm{ext\text{-}int}}(\mathcal{C}, \mathcal{D}) = \frac{1}{4}({q^2 - b_2 q - 1 - t_1 + b_1 + t_2}).
\]
Moreover, we also compute that
\begin{align*}
    N_{\mathrm{int\text{-}ext}}(\mathcal C,\mathcal D) &= \frac{1}{4}(q^2-b_2q-1+t_1+b_1-t_2), \\
    N_{\mathrm{ext\text{-}ext}}(\mathcal C,\mathcal D) &= \frac{1}{4}(q^2+b_2q-1+t_1+b_1+t_2), \\
    N_{\mathrm{int\text{-}int}}(\mathcal C,\mathcal D) &= \frac{1}{4}(q^2+b_2q-1-t_1+b_1-t_2-4q).
\end{align*}
In particular, we have 
$$\left|N_{\mathrm{int/ext\text{-}int/ext}}(\mathcal{C}, \mathcal{D}) - \frac{q^2}{4}\right|\leq q+\sqrt{q}+1.$$
\end{thm*}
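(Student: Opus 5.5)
The plan is to turn each position condition into a quadratic character condition and then evaluate the resulting character sums geometrically. Let $\chi$ be the quadratic character of $\mathbb F_q^\times$, and write $C(P)=P^{T}CP$ and $D(P)=P^{T}DP$.

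The first step is the classical criterion: for $P\notin\mathcal C$, the point $P$ is external to $\mathcal C$ exactly when $\chi(-\det(C)\,C(P))=1$. To see this, restrict the dual conic to the pencil of lines through $P$; its discriminant is $-\det(C)C(P)$ up to squares. Writing $\varepsilon_C=-\det C$ and $\varepsilon_D=-\det D$, we then get
\[
N_{\pm,\pm}=\frac14\sum_{P\notin \mathcal C\cup\mathcal D}\bigl(1\pm\chi(\varepsilon_C C(P))\bigr)\bigl(1\pm\chi(\varepsilon_D D(P))\bigr).
\]
Expanding reduces everything to four quantities:
\begin{itemize}
\item the count $\#(\mathbb P^2\setminus(\mathcal C\cup\mathcal D))=q^2+q+1-2(q+1)+b_1$;
\item the two single sums $S_C=\sum_{P\notin\mathcal C\cup\mathcal D}\chi(\varepsilon_C C(P))$ and $S_D$, defined in the same way;
\item the mixed sum $M=\sum_{P\notin\mathcal C\cup\mathcal D}\chi(\varepsilon_C\varepsilon_D\,C(P)D(P))$.
\end{itemize}
Comparing with the four claimed formulas, the differences force $M=\frac14$-independent terms of the form $b_2q+\dots$. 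They also force $S_C$ and $S_D$ to be, up to elementary terms, $\mp t_1$ and $\pm t_2$. So the target identities are read off from the statement before we compute anything.

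The second step computes the sums by fibering $\mathbb P^2\setminus(\mathcal C\cup\mathcal D)$ over $\mathbb P^1$ via $P\mapsto[C(P):D(P)]$; the fibers are the members $\lambda C-\mu D$ of the pencil.
\begin{itemize}
\item For $M$, the character depends only on the fiber parameter. A smooth member contributes $q+1-b_1$ points. The three singular members, at the roots of $\det(xC+D)$, are line pairs. Such a member contributes $1$ point or $2q+1$ points minus base points, according to whether the pair is conjugate or split. Splitting is governed by the square class of a $2\times2$ minor, and via duality this is what $b_2$ counts: a split singular member corresponds to a rational common tangent.
\item For $S_C$ the fibration does not make the character constant on fibers. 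Instead I would sum over a line pencil: parametrize points by the rational lines through them, or use the incidence variety $\{(P,\ell): P\in\ell,\ \ell \text{ tangent to } \mathcal C\}$ restricted against $\mathcal D$. The restriction of $D$ to the lines of $\mathcal C$'s pencil (equivalently, restricting $C$ to a rational parametrization $\mathbb P^1\cong\mathcal D$) gives a binary quartic, and the quadratic character summed over it is a point count on a genus one curve.
\end{itemize}

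The third step identifies those genus one curves with $E_1$ and $E_2$. The curve $w^2=(\text{binary quartic from } \mathcal C \text{ restricted to a parametrized } \mathcal D)$ has Jacobian determined by the invariants of the pencil. Its cubic resolvent is, up to scaling by squares, $\det(xC+D)$, with the twist $\det D$ coming from the normalization $\varepsilon_D$. Twisted curves of genus one with the same Jacobian have the same point count, so this gives $E_1$. The dual argument, exchanging conics with their duals (matrices $C^{-1},D^{-1}$) and points with lines, gives $E_2$; this is also where $b_2$ enters a second time. Substituting into the expansion yields the four formulas. The final bound then follows from Hasse, $|t_i|\le2\sqrt q$, together with $b_1,b_2\le4$ and the observed cancellations, giving $q+\sqrt q+1$.

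The main obstacle is the third step. One must pin down precisely the quadratic twist of each genus one curve, so that its trace is $\pm t_1$ and not the trace of a twist, and one must show that the exceptional and base-point corrections assemble exactly into $b_1$ and $b_2q$. I would first test the sign conventions on the diagonal pencil $C=\mathrm{diag}(1,1,1)$, $D=\mathrm{diag}(a,b,c)$, where all the quantities are explicit.
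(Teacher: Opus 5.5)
Your character-sum expansion is a legitimate route, and it differs from the paper's. The paper never forms a mixed sum. It slices $\mathbb{P}^2$ by the $\mathbb{F}_q$-tangent lines of $\mathcal{C}$, on which every point off $\mathcal{C}$ is external, and counts the points internal to $\mathcal{D}$ on each such line ($\tfrac{q-1}{2}$, $\tfrac{q+1}{2}$ or $0$). It counts the lines of each type via $E_2\to\mathcal{C}^*$, subtracts the points of $\mathcal{C}$ internal to $\mathcal{D}$ (counted via $E_1\to\mathcal{C}$), and halves. In that argument $b_2$ enters simply as the number of common tangents. Writing $\chi_C(P)=\chi(\varepsilon_C C(P))$, the four formulas are, in your notation, equivalent to
\[
S_C=q+t_2,\qquad S_D=q+t_1,\qquad M=(b_2-1)q .
\]
The genuine gap is $M$. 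Your fibration is correct up to its last step. The weights $\chi(\det C\det D\,\lambda\mu)$ sum to zero over the $q-1$ members $\lambda C-\mu D$ with $\lambda\mu\neq 0$, since a smooth member contributes $q+1-b_1$ and a singular one $q+1-b_1\pm q$. Hence $M=q\sum_i\chi(\det C\det D\,\lambda_i\mu_i)\,\sigma_i$, summed over the rational singular members, where $\sigma_i=\pm1$ according as the line pair is split or conjugate. But it is false that a split singular member corresponds to a rational common tangent. The $\sigma_i$ see the base points, not the tangents: counting $\mathbb{P}^2(\mathbb{F}_q)$ member by member gives $\sum_i\sigma_i=b_1-1$. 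Concretely, over $\mathbb{F}_7$ with $C=I$ and $D=\mathrm{diag}(1,3,4)$, all four base points are rational and all three singular members split, yet $b_2=0$.

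To close the gap you need two facts.
\begin{enumerate}
\item[(i)] For a rational singular member $A=\lambda C-\mu D$, the twisted sign $\chi(\det C\det D\,\lambda\mu)\,\sigma(A)$ equals the splitting sign $\sigma^*$ of the corresponding degenerate member $C^{-1}AD^{-1}=\lambda D^{-1}-\mu C^{-1}$ of the dual pencil (in the plane, a pair of points). To see this, write $\operatorname{adj}(A)=\kappa vv^{T}$, so that $\sigma(A)=\chi(-\kappa)$. Then $\lambda Cv=\mu Dv$ gives $\operatorname{adj}(C^{-1}AD^{-1})=\frac{\kappa\mu}{\lambda\det C\det D}(Dv)(Dv)^{T}$, whence $\sigma^*=\chi(\det C\det D\,\lambda\mu)\,\sigma(A)$.
\item[(ii)] The same member-by-member count in the dual plane, which is legitimate by Lemma~\ref{lem:dual-transversality}, gives $\sum_i\sigma_i^*=b_2-1$.
\end{enumerate}
Together these give $M=(b_2-1)q$; note the $-1$.

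Your single sums are fine, but simpler than you make them, and they involve neither duality nor $b_2$. Since $\mathcal{C}$ has $q(q+1)/2$ external and $q(q-1)/2$ internal points, $S_C=q-\sum_{P\in\mathcal{D}(\mathbb{F}_q)}\chi_C(P)$. The last sum is governed by the double cover of $\mathcal{D}$ by tangents to $\mathcal{C}$, which is the paper's $E_2$ seen through its projection to $\mathcal{D}$ (the paper's Proposition with $\mathcal{C}$ and $\mathcal{D}$ swapped), so $S_C=q+t_2$. Similarly $S_D=q+t_1$. Thus the curve obtained by restricting $C$ to $\mathcal{D}$ is $E_2$, not $E_1$ as your third step says. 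The twist you single out as the main obstacle is exactly Lemma~\ref{lem:incidence-cayley-twist}, which the paper settles with Cremona--Fisher--Stoll invariants and an explicit change of variables.
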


As a byproduct of this calculation, we also obtain exact formulas for the number of $\mathbb F_q$-rational points of $\mathcal{C}$ that are internal/external to $\mathcal{D}$. This refines the bounds appearing in \cite[Theorem~1.1]{abatangelo2011mutual} under the transversality hypothesis considered here.

\begin{prop}
Assume that $\mathcal C$ and $\mathcal D$ intersect transversely. Let $E_1: y^2=(\det D)\det(xC+D)$ be the elliptic curve associated to the pair $(\mathcal C,\mathcal D)$ and $t_1$ be its Frobenius trace. Let $b_1$ be the number of $\mathbb{F}_q$-rational intersection points of $\mathcal{C}$ and $\mathcal D$. Then
\begin{align*}
|\{P\in \mathcal C(\mathbb F_q)\setminus \mathcal D : P \text{ is external to } \mathcal D \}|
&= \frac{1}{2}(q+1-t_1-b_1), \\
|\{P\in \mathcal C(\mathbb F_q)\setminus \mathcal D : P \text{ is internal to } \mathcal D \}|
&= \frac{1}{2}(q+1+t_1-b_1).
\end{align*}
\end{prop}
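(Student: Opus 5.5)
The plan is to reduce the statement to a character sum over $\mathcal C(\mathbb F_q)\cong\mathbb P^1(\mathbb F_q)$, and then to identify that sum with the Frobenius trace of a genus one curve whose Jacobian is $E_1$. Let $Q_D(X)=X^TDX$ and let $\chi$ be the quadratic character of $\mathbb F_q^\times$, with $\chi(0)=0$. The first step is the standard criterion: a point $P\notin\mathcal D$ is external to $\mathcal D$ if and only if $-\det(D)\,Q_D(P)$ is a nonzero square. This quantity is well defined up to squares. To justify the criterion, I would diagonalize $D$ over $\mathbb F_q$ and write the polar line of $P$. The tangent lines through $P$ are defined over $\mathbb F_q$ exactly when this polar line meets $\mathcal D$ in two rational points, and that is a discriminant condition whose value is $-\det(D)Q_D(P)$ up to squares. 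I would check the sign on the model $D=\mathrm{diag}(1,1,-1)$ to make sure it is $-\det D$ and not $\det D$.

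Next, choose a parametrization $\varphi:\mathbb P^1\to\mathcal C$ by quadratic forms; this exists because $\mathcal C$ is a smooth conic over a finite field and so has a rational point. Then $f(s,u)=Q_D(\varphi(s,u))$ is a binary quartic. Its zeros correspond to $\mathcal C\cap\mathcal D$, so by transversality $f$ has four distinct roots over $\overline{\mathbb F_q}$, of which exactly $b_1$ are rational. With $c=-\det D$, the external count becomes
\[
\sum_{(s:u)\in\mathbb P^1(\mathbb F_q),\,f\neq 0}\frac{1+\chi(cf(s,u))}{2}=\frac12\Bigl(q+1-b_1+S\Bigr),\qquad S=\sum_{(s:u)\in \mathbb P^1(\mathbb F_q)}\chi(cf(s,u)).
\]
The sum $S$ is well defined because $f$ has even degree. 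The internal count is then $(q+1-b_1)-\text{external}=\tfrac12(q+1-b_1-S)$. So both formulas in the proposition follow once I show $S=-t_1$.

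Let $X$ be the smooth genus one curve $w^2=cf(s,u)$ in weighted projective space $\mathbb P(1,1,2)$. Counting its points fibre by fibre over $\mathbb P^1$ gives $\#X(\mathbb F_q)=q+1+S$. Hence $S=-t(X)$, and since a genus one curve and its Jacobian have the same number of rational points, it suffices to prove $\mathrm{Jac}(X)\cong E_1$ over $\mathbb F_q$. To set this up, I would realize $X$ geometrically as the double cover of $\mathcal C$ branched at $\mathcal C\cap\mathcal D$. Concretely, this is the intersection of two quadrics in $\mathbb P^3$ with coordinates $(X,w)$:
\[
X^TCX=0,\qquad X^TDX=c\,w^2,
\]
with Gram matrices $A=\mathrm{diag}(C,0)$ and $B=\mathrm{diag}(D,-c)$. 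I would then invoke the classical description of the Jacobian of a smooth intersection of two quadrics in $\mathbb P^3$ with Gram matrices $A,B$: it is $y^2=\det(xA+B)$, up to a fixed normalizing constant. Here $\det(xA+B)=-c\det(xC+D)=\det(D)\det(xC+D)$, which is exactly the right-hand side defining $E_1$.

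The main obstacle is this last identification, and in particular getting the twist right. An error by a nonsquare factor, or a sign change $x\mapsto -x$, would flip $t_1$ to $-t_1$ and swap the two formulas. If I cannot pin down a usable normalization of the classical theorem, the fallback is to compare the invariants $I(f),J(f)$ of the quartic directly with the coefficients of the cubic $\det(D)\det(xC+D)$. That cubic is the binary cubic resolvent attached to the pencil. I would then check the twist by a direct point count in one explicit example, for instance $C=\mathrm{diag}(1,1,-1)$ with $D$ diagonal.
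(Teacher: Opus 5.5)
Your proposal is correct. Its first half is the paper's argument in character-sum language. The paper also uses the criterion that $P\notin\mathcal D$ is external exactly when $-\det(D)Q_D(P)$ is a nonzero square, and parametrizes $\mathcal C\cong\mathbb P^1$. It then counts fibres of the double cover $W^2=-\det(D)f(u,v)$, phrased there as the incidence curve $E_1\subset\mathcal C\times\mathcal D^*$, which gives $|E_1(\mathbb F_q)|=2\cdot\#\{\text{external}\}+b_1$.

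The genuine difference is how the Jacobian is identified. The paper moves $\mathcal C$ to $XZ=Y^2$ and feeds the coefficients of the quartic into the Cremona--Fisher--Stoll formulas for $a_2,a_4,a_6$. It then expands $\det(XC+ZD)$ by hand and matches the two cubics via $x=\delta(X+c)$, $y=2\delta Y$. This is essentially your fallback of comparing $I(f),J(f)$ with the cubic. You instead view the cover as the base locus of the pencil spanned by $\mathrm{diag}(C,0)$ and $\mathrm{diag}(D,-c)$ in $\mathbb P^3$. A block determinant then yields $\det(D)\det(xC+D)$ in one line, with no normalization of coordinates, and it shows where the twist by $\det D$ comes from: the extra diagonal entry $-c$.

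The cost is that you must quote the quadric-intersection theorem with the correct twist over a non-closed field. The paper's computation, by contrast, is self-contained given Cremona--Fisher--Stoll. Your twist worry is in fact harmless:
\begin{itemize}
\item rescaling the Gram matrices changes $\det(xA+zB)$ by a fourth power or by a coordinate change on $\mathbb P^1$;
\item $x\mapsto -x$ costs nothing for $4\times 4$ matrices;
\item the two rulings of the smooth quadric $xA+zB$ are defined over $k(\sqrt{\det(xA+zB)})$, and the resulting curve of rulings is $\mathrm{Pic}^2$ of the base curve, hence a torsor under its Jacobian.
\end{itemize}
Together these give exactly $y^2=\det(xA+zB)$ with no extra constant. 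The paper's explicit computation agrees with this for your pencil. So you do not need the numerical example, which by itself could not certify the twist for every $q$ anyway.
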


Our proof is geometric and relies on analyzing two incidence varieties associated with the conic pair. While the broad higher-dimensional asymptotic machinery via multi-quadratic covers and Chebotarev-type equidistribution is established by Slavov in \cite{Slavov} and also our companion work \cite{Mypaper}, the planar conic setting stands out as the error terms can be computed explicitly. We will conclude this paper with a detailed remark explaining why it is hard to extend this method to higher-dimensional quadrics.

\section{Proof of the Main Counting Formula}
\subsection*{Idea of the approach}
We pick an \(\mathbb F_q\)-rational tangent line \(\mathcal L\) to \(\mathcal C\).  
Every \(\mathbb F_q\)-point on this line other than the tangency point is, by definition, external to \(\mathcal C\).  
Hence, to count points in \(\mathbb P^2(\mathbb F_q)\) that are external to \(\mathcal C\) and internal to \(\mathcal D\), it suffices to understand, for each such tangent line \(\mathcal L\), how many points on \(\mathcal L\) are internal to \(\mathcal D\).

For a fixed $\mathbb{F}_q$-tangent line \(\mathcal L\) to $\mathcal{C}$, there are three possibilities:
\begin{enumerate}
  \item[\textbf{(1)}] \(\mathcal L\cap\mathcal D\) consists of two distinct \(\mathbb F_q\)-points.
  \item[\textbf{(2)}] \(\mathcal L\cap\mathcal D\) consists of two \(\mathbb F_{q^2}\)-conjugate points.
  \item[\textbf{(3)}] \(\mathcal L\) is also tangent to \(\mathcal D\) (a \emph{bad} line).
\end{enumerate}
For cases (1) and (2), we consider the incidence variety
\begin{equation}\label{eq:incidence} 
\{(P,T)\in \mathcal L\times \mathcal D^* : P\in T\}\to \mathcal L
\end{equation}
as a double cover branched at the two intersection points \(\mathcal L\cap\mathcal D\). Then, the number of $\mathbb{F}_q$-rational points on $\mathcal{L}$ that are internal to $\mathcal{D}$ depends on whether the two branch points are $\mathbb{F}_q$-rational or $\mathbb{F}_{q^2}$-conjugate.
Concretely:
\begin{itemize}
  \item \textbf{Case 1}: If the branch points $\mathcal L\cap \mathcal D$ are \(\mathbb F_q\)-rational, there are \(\tfrac{q-1}{2}\) points on \(\mathcal L\) internal to $\mathcal{D}$;
  \item \textbf{Case 2}: If the branch points $\mathcal L\cap\mathcal D$ are \(\mathbb F_{q^2}\)-conjugate, there are \(\tfrac{q+1}{2}\) points on $\mathcal{L}$ internal to $\mathcal{D}$.
\end{itemize}
The remaining “bad” tangent lines (case (3)), those also tangent to \(\mathcal D\), contribute only external points to the conic \(\mathcal D\). The number of such bad lines equals the number of \(\mathbb F_q\)-points of \(\mathcal C^*\cap\mathcal D^*\).

Now, we prove the counts appearing in the above cases (1) and (2). 
\begin{proof}
    By the Riemann-Hurwitz formula, defined in \eqref{eq:incidence} has genus $0$, and thus $q+1$  $\mathbb{F}_q$-rational points. Each $P\in\mathcal{L}(\mathbb{F}_q)$ that is external to $\mathcal{D}$ will contribute two $\mathbb{F}_q$-rational points on the incidence variety, and the points internal to $\mathcal{D}$ contribute none. If the two branch points $\mathcal{L}\cap \mathcal{D}$ are $\mathbb{F}_q$-rational, they will each contribute one $\mathbb{F}_q$-rational points on the incidence variety. 

    Therefore, in case (1), the $q+1$ rational points on the incidence variety comes from 
    $$q+1 = 2 + 2\cdot \frac{q-1}{2},$$
    where the $q+1$ points on $\mathcal{L}(\mathbb{F}_q)$ split into $2$ branch points, $\frac{q-1}{2}$ points external to $\mathcal{D}$, and $\frac{q-1}{2}$ points internal to $\mathcal{D}$. 

    In case (2), we have 
    $$q+1 = 2\cdot \frac{q+1}{2},$$
    where the $q+1$ points on $\mathcal{L}(\mathbb{F}_q)$ splits into $\frac{q+1}{2}$ points external to $\mathcal{D}$, and $\frac{q+1}{2}$ points internal to $\mathcal{D}$. 
\end{proof}

\subsection*{Elliptic curves and Frobenius traces}

When we let the tangent line \(\mathcal L\) vary over all \(\mathbb F_q\)-tangent lines of \(\mathcal C\), the pattern of how \(\mathcal L\) intersects \(\mathcal D\) is controlled by an elliptic curve naturally associated to the conic pair \((\mathcal C,\mathcal D)\).  

We first prove a lemma that if the conics $\mathcal{C}$ and $\mathcal{D}$ intersect transversally, then so do the corresponding dual conics, $\mathcal{C}^*$ and $\mathcal{D^*}$. 
\begin{lem}\label{lem:dual-transversality}
Let \(k\) be a field of characteristic not equal to \(2\), and let
\(\mathcal C,\mathcal D\subset \mathbb P^2_k\) be smooth conics represented by symmetric matrices \(C\) and \(D\). If \(\mathcal C\) and
\(\mathcal D\) intersect transversely over \(\overline{k}\), then their dual
conics \(\mathcal C^*\) and \(\mathcal D^*\) also intersect transversely over
\(\overline{k}\).
\end{lem}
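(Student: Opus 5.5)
We will translate transversality into a statement about the pencil $\det(xC+D)$. Over $\overline{k}$ with $\operatorname{char} k\neq 2$, the conics $\mathcal C$ and $\mathcal D$ meet transversely (four distinct points) if and only if the binary cubic $f(x,y)=\det(xC+yD)$ has three distinct roots in $\mathbb P^1$. This is the classical pencil criterion, and we will either cite it or sketch it. Four points in general position give exactly three singular members of the pencil, namely the three line pairs through them. Conversely, a tangency or a higher contact forces a repeated root: one can normalize and compute directly, or use the fact that the discriminant of $f$ vanishes exactly when the base locus is nonreduced. Once this is established, the lemma reduces to showing that the dual pencil has the same property.

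The dual conic $\mathcal C^*$ is represented by the adjugate matrix $C^{-1}$ (up to scalar, $\operatorname{adj} C$), and similarly $\mathcal D^*$ by $D^{-1}$. We therefore need $g(x,y)=\det(xC^{-1}+yD^{-1})$ to have three distinct roots. We have
\[
xC^{-1}+yD^{-1}=C^{-1}(xD+yC)D^{-1},
\]
so
\[
g(x,y)=\det(C)^{-1}\det(D)^{-1}\det(yC+xD)=\frac{f(y,x)}{\det C\det D}.
\]
Thus the roots of $g$ are those of $f$ with the coordinates swapped, $[x:y]\mapsto[y:x]$. This map is an automorphism of $\mathbb P^1$, so it preserves the distinctness of the roots. (The points $x=0$ or $y=0$ are not roots, since $C$ and $D$ are nonsingular.)

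Applying the criterion in the reverse direction then shows that $\mathcal C^*\cap\mathcal D^*$ consists of four distinct points, i.e., the intersection is transverse. The main obstacle is a rigorous proof of the equivalence ``transverse $\iff$ $f$ squarefree''. We will handle it by simultaneous diagonalization. If $f$ has distinct roots, then $C^{-1}D$ has distinct eigenvalues, so $C$ and $D$ can be simultaneously diagonalized as $\mathrm{diag}(1,1,1)$ and $\mathrm{diag}(\lambda_1,\lambda_2,\lambda_3)$ with the $\lambda_i$ distinct. The intersection points then satisfy $X_i^2=$ explicit nonzero values, which gives four distinct points $(\pm a:\pm b:1)$, and at these points the gradients are visibly independent. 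For the converse, we will argue via the contrapositive: a repeated eigenvalue leads, by a normal-form analysis, to a nonreduced intersection. Alternatively, one can note directly that transverse base points force the three degenerate members to be distinct line pairs. Characteristic $\neq 2$ is used throughout so that the symmetric-matrix/quadratic-form correspondence and the diagonalization are valid.
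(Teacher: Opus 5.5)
Your proposal is correct and matches the paper's proof. Both reduce transversality to the binary cubic $\det(xC+yD)$ having three distinct roots (the paper simply cites Harris, Prop.~22.34, for this criterion), and both use $\det(xC^{-1}+yD^{-1})=(\det C\det D)^{-1}\det(xD+yC)$ to see that the dual cubic's roots are the original ones with coordinates swapped. Your simultaneous-diagonalization sketch of the criterion is a reasonable supplement, but nothing beyond the citation is needed.
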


\begin{proof}
We refer to a classic result \cite[Proposition 22.34]{harrisag} that the conics $\mathcal{C}$ and $\mathcal{D}$ intersect transversally if and only if
the discriminant polynomial 
$$\det(xC+zD)$$
has $3$ distinct roots over 
\(\mathbb{P}^1(\overline{k})\). Since the dual conics are represented by \(C^{-1}\) and \(D^{-1}\), their discriminant
polynomial is
\[
\det(xC^{-1}+zD^{-1}).
\]
Using
\[
\det(xC^{-1}+zD^{-1})
=
(\det C\det D)^{-1}\det(xD+zC),
\]
we see that this polynomial has $3$ distinct roots if and only if
\(\det(xC+zD)\) has $3$ distinct roots. Hence \(\mathcal C^*\) and
\(\mathcal D^*\) intersect transversely if and only if $\mathcal C$ and $\mathcal D$ intersect transversally.
\end{proof}

Now, we assume that the conics $\mathcal{C}, \mathcal{D}$ intersect transversally. Based on the above lemma, their duals $\mathcal{C}^*$ and $\mathcal{D}^*$ also intersect transversally. We introduce the incidence varieties
$$E_1 = \{(P, \mathcal L)\in \mathcal C\times\mathcal D^* : P\in \mathcal L\}, \quad E_2 = \{(\mathcal L, P)\in\mathcal{C}^*\times\mathcal{D}: P\in\mathcal{L}\}.$$
The projection \(E_1\to\mathcal C\) is a $2$-to-$1$ map branched at the four intersection points of $\mathcal{C}$ and $\mathcal{D}$ over $\overline{\mathbb{F}_q}$. By Riemann-Hurwitz, \(E_1\) has genus one, and it is an elliptic curve. For the same reason, $E_2$ is also an elliptic curve by considering the 2-to-1 projection map $E_2\to \mathcal{C}^*$ ramified at the four intersection points of $\mathcal{C}^*$ and $\mathcal{D}^*$.

We will prove later in Lemma \ref{lem:incidence-cayley-twist} that $E_1$ and $E_2$ can be represented over $\mathbb{F}_q$ by the models
$$E_1: y^2 = \det(D)\det(xC+D), \quad
E_2: y^2 = \det(C)\det(xD+C).$$

If \(P\in \mathcal C(\mathbb F_q)\setminus\mathcal D\) is external to $\mathcal{D}$, it will contribute
two \(\mathbb F_q\)-points to \(E_1(\mathbb F_q)\). Else, it contributes none. The $\mathbb{F}_q$-rational branch points \(P\in(\mathcal C\cap\mathcal D)(\mathbb F_q)\) contribute one point each on $E_1(\mathbb{F}_q)$. Therefore, we have 
$$|\{P\in \mathcal{C}(\mathbb{F}_q)\setminus\mathcal{D}: P \text{ is external to } \mathcal{D} \}|=\frac{|E_1(\mathbb F_q)|-b_1}{2}=\frac{1}{2}(q+1-t_1-b_1),$$
where $t_1$ is the trace of Frobenius of the elliptic curve $E_1$, and $b_1$ is the number of $\mathbb{F}_q$-rational points of $\mathcal{C}\cap\mathcal{D}$. Since \(|\mathcal C(\mathbb F_q)|=q+1\), the number of points in $\mathcal{C}(\mathbb{F}_q)$ that are internal to $\mathcal{D}$ is
$$|\{P\in \mathcal{C}(\mathbb{F}_q)\setminus\mathcal D: P \text{ is internal to } \mathcal{D} \}| = q+1-b_1-\frac{q+1-t_1-b_1}{2} = \frac{1}{2}(q+1+t_1-b_1).$$

Similarly, in the dual setting, let $t_2$ be the trace of Frobenius of $E_2$, and $b_2$ be the number of $\mathbb{F}_q$-rational points of $C^*\cap D^*$. We can count the number of tangent lines of $\mathcal{C}$ that are in case (1), (2), (3) defined in the previous section by
\begin{align*}
    |\{\mathcal L\in \mathcal{C}^*(\mathbb{F}_q)\setminus\mathcal{D}^*: \mathcal L\cap \mathcal{D} \text{ consists of two }\mathbb{F}_q\text{-points}\}| &= \frac{|E_2(\mathbb{F}_q)|-b_2}{2} = \frac{1}{2}(q+1-t_2-b_2), \\
    |\{\mathcal L\in \mathcal{C}^*(\mathbb{F}_q)\setminus\mathcal D^*: \mathcal L\cap \mathcal{D} \text{ consists of two }\mathbb{F}_{q^2}\text{-points}\}| &= q+1-b_2-\frac{q+1-t_2-b_2}{2} = \frac{1}{2}(q+1+t_2-b_2).
\end{align*}

\subsection*{Main formulas}

Putting these ingredients together, the total number of points in $\mathbb{P}^2(\mathbb{F}_q)$ that are external to $\mathcal C$ and internal to $\mathcal D$ is obtained by summing over all tangent lines $\mathcal L$ of $\mathcal C$ as 
\[
\frac{q - 1}{2} \times \text{(number of case (1) lines)} + \frac{q+1}{2}\times \text{(number of case (2) lines)}
  \;-\; (\mathbb{F}_q\text{-points on }\mathcal{C} \text{ internal to } \mathcal{D}) .
\]

We count the points external to $\mathcal{C}$ and internal to $\mathcal{D}$ twice, as there are two tangent lines to $\mathcal{C}$ passing such a point. Therefore, we divide the above count by $2$ to get the final formula. 

\begin{thm}[Main Counting Formula]\label{thm: main_conic_case}
Let $q$ be an odd prime power, and $\mathcal C, \mathcal D$ be two smooth plane conics defined over $\mathbb{F}_q$ with transversal intersection. Let \(E_1: y^2 = \det(D)\det(xC + D)\) and \(E_2: y^2 = \det(C)\det(xD + C)\) be the two elliptic curves associated to the pair of conics and their duals with Frobenius traces $t_1$ and $t_2$ respectively. Let $b_1,b_2$ be the number of $\mathbb{F}_q$-rational intersection points of the two conics, and of the two dual conics. Then,
\begin{align*}
    N_{\mathrm{ext\text{-}int}}(\mathcal C,\mathcal D) &= \frac{1}{4}\left(q^2-b_2q-1-t_1+b_1+t_2\right) ,\\ 
    N_{\mathrm{int\text{-}ext}}(\mathcal C,\mathcal D) &= \frac{1}{4}\left(q^2-b_2q-1+t_1+b_1-t_2\right) ,\\
    N_{\mathrm{ext\text{-}ext}}(\mathcal C,\mathcal D) &= \frac{1}{4}\left(q^2+b_2q-1+t_1+b_1+t_2\right) ,\\
    N_{\mathrm{int\text{-}int}}(\mathcal C,\mathcal D) &= \frac{1}{4}\left(q^2+b_2q-1-t_1+b_1-t_2-4q\right).
\end{align*}
In particular, we have 
$$\left|N_{\mathrm{int/ext\text{-}int/ext}}(\mathcal{C}, \mathcal{D}) - \frac{q^2}{4}\right|\leq q+\sqrt{q}+1.$$
\end{thm}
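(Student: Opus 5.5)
The plan is to turn the heuristic sum written just before the statement into an exact double count, and then obtain the remaining three formulas by complementation. Fix the setup first. Every $\mathbb F_q$-point $P\notin\mathcal C$ that is external to $\mathcal C$ lies on exactly two $\mathbb F_q$-rational tangent lines of $\mathcal C$. Every point of $\mathcal C(\mathbb F_q)$ lies on exactly one, namely its own tangent line. Internal points lie on none. So for any property $\Pi$ of points,
\[
2\,\#\{P \text{ external to } \mathcal C \text{ with } \Pi\} \;=\; \sum_{\mathcal L\in\mathcal C^*(\mathbb F_q)} \#\{P\in\mathcal L(\mathbb F_q): \Pi\} \;-\; \#\{P\in\mathcal C(\mathbb F_q): \Pi\}.
\]
I will apply this with $\Pi=$ ``internal to $\mathcal D$'' and with $\Pi=$ ``external to $\mathcal D$''.

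\textbf{Step 1 (ext--int).} Take $\Pi=$ ``internal to $\mathcal D$''. The line counts from the case analysis are: a case (1) line contributes $\frac{q-1}{2}$, a case (2) line contributes $\frac{q+1}{2}$, and a case (3) line contributes $0$, since every point of a tangent line of $\mathcal D$ is on $\mathcal D$ or external to it. The numbers of case (1) and case (2) lines are $\frac12(q+1-t_2-b_2)$ and $\frac12(q+1+t_2-b_2)$, computed above via $E_2$. The correction term is the count of points of $\mathcal C(\mathbb F_q)$ internal to $\mathcal D$, which is $\frac12(q+1+t_1-b_1)$ by the Proposition. The line sum simplifies to $\frac12\bigl(q(q+1-b_2)+t_2\bigr)$. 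Subtracting the correction and halving gives $\frac14(q^2-b_2q-1-t_1+b_1+t_2)$.

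\textbf{Step 2 (ext--ext).} Take $\Pi=$ ``external to $\mathcal D$''. Case (1) and case (2) lines contribute the same $\frac{q-1}{2}$ and $\frac{q+1}{2}$ as before. A bad line $\mathcal L$ of case (3) contributes $q$: all of its points except the single point $\mathcal L\cap\mathcal D$ are external to $\mathcal D$. One point needs checking here. The tangency point of $\mathcal L$ with $\mathcal C$ must differ from its tangency point with $\mathcal D$, because a common point with a common tangent would be a non-transverse intersection. So that $\mathcal C$-tangency point is external to $\mathcal D$, counted once in $q$, and removed again by the correction term $\frac12(q+1-t_1-b_1)$. Assembling gives $\frac14(q^2+b_2q-1+t_1+b_1+t_2)$.

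\textbf{Step 3 (the internal-to-$\mathcal C$ formulas).} Use totals over points off a single conic. Among the $q^2$ points of $\mathbb P^2(\mathbb F_q)\setminus\mathcal D$, exactly $\frac{q(q+1)}2$ are external to $\mathcal D$ and $\frac{q(q-1)}2$ are internal; the same holds for $\mathcal C$. Then:
\[
N_{\mathrm{int\text{-}ext}} = \tfrac{q(q+1)}{2} - N_{\mathrm{ext\text{-}ext}} - \#\{P\in\mathcal C(\mathbb F_q)\text{ external to }\mathcal D\},
\]
and analogously for $N_{\mathrm{int\text{-}int}}$, using $N_{\mathrm{ext\text{-}int}}$ and the internal count from the Proposition. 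Points of $\mathcal C$ are neither internal nor external to $\mathcal C$, which is why the count on $\mathcal C$ is subtracted. Direct simplification should reproduce the stated expressions, and I would cross-check by verifying that the four $N$'s together with $|\mathcal C\cup\mathcal D|(\mathbb F_q)=2(q+1)-b_1$ sum to $q^2+q+1$.

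\textbf{Step 4 (the bound).} Use Hasse, $|t_i|\le2\sqrt q$, together with $0\le b_i\le 4$. In each formula the deviation from $q^2/4$ is $\frac14\bigl(\pm b_2q \text{ (or } b_2q-4q) -1 \pm t_1 + b_1 \pm t_2\bigr)$. The $q$-term is at most $q$ in absolute value, the traces contribute at most $\sqrt q$, and $|b_1-1|/4<1$, which gives $q+\sqrt q+1$.

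The main obstacle is the bookkeeping for special points rather than any deep input. This means the tangency points on bad lines, points of $\mathcal C\cap\mathcal D$ lying on tangent lines of $\mathcal C$, and making sure each appears with the correct multiplicity in the double count. I would handle this by checking the transversality argument carefully, so that no tangent line of $\mathcal C$ passes through a point of $\mathcal C\cap\mathcal D$ in a degenerate way, and by running the sum-to-$q^2+q+1$ consistency check.
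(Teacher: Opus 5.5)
Your proposal is correct and is essentially the paper's proof: Steps 1 and 2 are exactly its double count over the $\mathbb F_q$-rational tangent lines of $\mathcal C$. You use the same per-line counts ($\frac{q-1}{2}$, $\frac{q+1}{2}$, and $q$ for bad lines), the same $E_2$-counts of case (1)/(2) lines and the same $E_1$-correction for points of $\mathcal C(\mathbb F_q)$, and your simplifications and Hasse-bound estimate check out. The only deviation is Step 3, where you get $N_{\mathrm{int\text{-}ext}}$ and $N_{\mathrm{int\text{-}int}}$ from the marginal counts $\frac{q(q\pm1)}{2}$ of points external/internal to $\mathcal D$; the paper instead uses the symmetry $N_{\mathrm{int\text{-}ext}}(\mathcal C,\mathcal D)=N_{\mathrm{ext\text{-}int}}(\mathcal D,\mathcal C)$ (which swaps $t_1$ and $t_2$) followed by the total count of $\mathbb P^2(\mathbb F_q)\setminus(\mathcal C\cup\mathcal D)$, and both yield the stated formulas.
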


\begin{proof}
    With all of the above ingredients, we have 

    $$N_{\mathrm{ext\text{-}int}}(\mathcal C,\mathcal D)=\frac{1}{2}\left(\frac{q-1}{2}\frac{q+1-t_2-b_2}{2}+\frac{q+1}{2}\frac{q+1+t_2-b_2}{2}-\frac{q+1+t_1-b_1}{2}\right).$$
    We can simplify it as 
    $$N_{\mathrm{ext\text{-}int}}(\mathcal C,\mathcal D) = \frac{1}{4}\left(q^2-b_2q-1-t_1+b_1+t_2\right).$$

    The computation for $N_{\mathrm{ext\text{-}ext}}(\mathcal C,\mathcal D)$ is similar. The corresponding formula is given by 
    $$N_{\mathrm{ext\text{-}ext}}(\mathcal C,\mathcal D) = \frac{1}{2}\left(\frac{q-1}{2}\frac{q+1-t_2-b_2}{2} + \frac{q+1}{2}\frac{q+1+t_2-b_2}{2} + qb_2-\frac{q+1-t_1-b_1}{2}\right),$$
    which simplifies to
    $$N_{\mathrm{ext\text{-}ext}}(\mathcal C,\mathcal D) = \frac{1}{4}(q^2+b_2q-1+t_1+b_1+t_2).$$

    We note that $N_{\mathrm{int\text{-}ext}}(\mathcal{C}, \mathcal{D}) = N_{\mathrm{ext\text{-}int}}(\mathcal{D}, \mathcal{C})$. Therefore, the formula for $N_{\mathrm{int\text{-}ext}}(\mathcal{C}, \mathcal{D})$ follows from the formula for $N_{\mathrm{ext\text{-}int}}(\mathcal{C}, \mathcal{D})$ by swapping $t_1$ with $t_2$. 

    Lastly, we know that 
    $$N_{\mathrm{int\text{-}ext}}(\mathcal{C}, \mathcal{D}) + N_{\mathrm{int\text{-}int}}(\mathcal{C}, \mathcal{D}) + N_{\mathrm{ext\text{-}int}}(\mathcal{C}, \mathcal{D}) + N_{\mathrm{ext\text{-}ext}}(\mathcal{C}, \mathcal{D}) = q^2+q+1-(2q+2-b_1),$$
    where the right hand side is the number of $\mathbb{F}_q$-rational points of $\mathbb{P}^2\setminus (\mathcal C \cup \mathcal D )$. Then, the formula for $N_{\mathrm{int\text{-}int}}(\mathcal{C}, \mathcal{D})$ follows from the previous three computations. Since $0\leq b_i\leq 4$ and $|t_i|\leq 2\sqrt{q}$ from the Hasse bound, we get $q+\sqrt{q}+1$ as the upper bound for the error term. 
\end{proof}

\begin{rmk}[Comparison of the Error Terms]
The formula presented in Theorem \ref{thm: main_conic_case} improves the error term when compared to the multi-quadratic cover framework studied by Slavov \cite{Slavov}. If we approach this planar conic case using the multi-quadratic cover framework, the condition of being simultaneously external to $\mathcal{C}$ and internal to $\mathcal{D}$ corresponds to a specific Frobenius conjugacy class in a finite étale Galois cover $Y \to U \subset \mathbb{P}^2$ with Galois group $(\mathbb{Z}/2\mathbb{Z})^2$ \cite{Slavov, Mypaper}. The error term would be $O(q^{2-1/2}) = O(q^{3/2})$ coming from the Chebotarev density theorem or Lang-Weil estimate.

Our counting formula:
\[
N_{\mathrm{ext\text{-}int}}(\mathcal{C}, \mathcal{D}) = \frac{1}{4}\left(q^2 - b_2 q - 1 - t_1 + b_1 + t_2\right)
\]
demonstrates that the error term in the planar conic case is $O(q)$, and the errors $t_1, t_2$ coming from the cohomology are $O(q^{1/2})$ instead of $O(q^{3/2})$.

This reduction of the cohomological error from $O(q^{3/2})$ down to $O(q^{1/2})$ could be unique to the plane conic case, and may not generalize to higher-dimensional quadrics. It relies on the property that every $\mathbb{F}_q$-rational point on an $\mathbb{F}_q$-tangent line to the planar conic $\mathcal{C}$ is external to $\mathcal{C}$. By slicing the plane via the $\mathbb{F}_q$-tangent lines of $\mathcal{C}$, we restrict to points that are automatically external to $\mathcal{C}$. This transforms a multi-quadratic cover over $\mathbb{P}^2$ problem into a problem over a one-dimensional family of lines. This is why the cohomological error term reduces from $O(q^{3/2})$ to $O(q^{1/2})$ in this planar conic case.

For general higher-dimensional quadrics, a rational point lying on an $\mathbb{F}_q$-rational tangent space is not automatically external to the quadric. Under the multi-quadratic cover framework established in \cite{Slavov, Mypaper}, there is $O(q^{n-1})$ error term coming from the ramification locus, and a $O(q^{n-1/2})$ error term coming from the cohomology. The exact counting formula would be hard, as it relies on computation of higher étale cohomology groups.
\end{rmk}

\section*{Models for the incidence varieties \(E_1\) and \(E_2\)}
Now, we show that the incidence varieties 
$$E_1 = \{(P, \mathcal L)\in \mathcal C\times\mathcal D^* : P\in \mathcal L\}, \quad E_2 = \{(\mathcal L, P)\in\mathcal{C}^*\times\mathcal{D}: P\in\mathcal{L}\}$$
can be represented over $\mathbb F_q$ by the models 
$$ E_1: y^2 = \det(D)\det(xC+D), \quad 
    E_2: y^2 = \det(C)\det(xD+C). $$

The classical work by Griffiths and Harris \cite{GH} and a more explicit computation by Ionas \cite{Ionas} give an explicit model of this incidence variety over $\mathbb{C}$. They identify $E_1$ with the Cayley cubic $y^2 = \det(xC+D)$ over $\mathbb{C}$. While their result still holds for any algebraically closed field with characteristic other than $2$, the incidence variety $E_1$ is only isomorphic to a quadratic twist of $y^2 = \det(xC+D)$ over a non-algebraically closed field. The purpose of this section is to determine this twist over $\mathbb F_q$.

\begin{lem}\label{lem:incidence-cayley-twist}
Let $q$ be an odd prime power, and let $\mathcal C,\mathcal D\subset \mathbb P^2$ be smooth plane conics defined over $\mathbb{F}_q$ with matrix representations $C$ and $D$. We further assume that $\mathcal C$ and $\mathcal D$ intersect transversely over $\overline{\mathbb{F}_q}$. Let
$$E_1 =\{(P, \mathcal L)\in \mathcal C\times \mathcal D^*:P\in \mathcal L\}$$
be the incidence variety. Then, $E_1$ is isomorphic over $\mathbb{F}_q$ to the elliptic curve 
$$y^2 = \det(D)\det(xC+D).$$

\end{lem}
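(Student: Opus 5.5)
The plan is to realize $E_1$ concretely as a double cover of $\mathbb P^1$ given by an explicit quartic, and then to identify its Jacobian over $\mathbb F_q$ with the cubic $y^2=\det(D)\det(xC+D)$. Since $\mathcal C$ is a smooth conic over a finite field, it has an $\mathbb F_q$-point. So after an $\mathbb F_q$-linear change of coordinates we may write $C=\lambda C_0$, where $C_0$ is the matrix of $x_0x_2-x_1^2$ and $\lambda\in\mathbb F_q^\times$. Replacing $C$ by $C_0$ changes $\det(xC+D)$ only by the substitution $x\mapsto \lambda x$, which is an $\mathbb F_q$-isomorphism of the cubic models. Similarly, $\det(D)$ and $\det(xC+D)$ change by a common square factor under $D\mapsto M^TDM$ and $C\mapsto M^TCM$; rescaling $D$ by $\mu$ changes the model by $y\mapsto \mu^2 y$ after substituting $x\mapsto \mu x$. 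So I first check that the right-hand model is invariant, up to $\mathbb F_q$-isomorphism, under all these normalizations.

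Next, parametrize $\mathcal C$ by $t\mapsto P(t)=(1:t:t^2)$. The fibre of $E_1\to\mathcal C$ over $P$ consists of the two tangent lines to $\mathcal D$ through $P$. Concretely, it is the pair of points $\mathcal L\in \mathcal D^*$ with $\mathcal L^TD^{-1}\mathcal L=0$ and $\mathcal L\cdot P=0$. Solving this restricted binary quadratic, its discriminant is, up to a nonzero square, $-\det(D)\,P^TDP$; this is the classical criterion that $P$ is external to $\mathcal D$ iff $-\det(D)Q_D(P)$ is a nonzero square. Adjoining the square root gives an $\mathbb F_q$-birational model
\[
E_1:\ y^2=-\det(D)\,f(t),\qquad f(t)=(1,t,t^2)\,D\,(1,t,t^2)^T,
\]
a quartic whose four roots are the parameters of $\mathcal C\cap\mathcal D$, which are distinct by transversality. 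I will double-check the sign by testing on a diagonal example.

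Then I use two standard facts. First, by Lang's theorem (or Hasse) a genus one curve over $\mathbb F_q$ has a rational point, so $E_1$ is $\mathbb F_q$-isomorphic to its Jacobian. Second, the Jacobian of $y^2=a f(t)$, with $f$ a quartic with invariants $I,J$, is the classical cubic $y^2=x^3-27I x-27J$ scaled by $a$. Equivalently, up to an $\mathbb F_q$-isomorphism it is $y^2=a\cdot R(x)$, where $R$ is the resolvent cubic of the pencil. The key computation is then that, for $C=C_0$, the cubic $\det(xC_0+D)$ equals the resolvent cubic of $f$ up to an explicit constant and an affine change in $x$. This holds because the binary quartic $f$ is the restriction of the ternary form $D$ to the conic $C_0$, and the pencil of conics $xC_0+D$ restricts to the pencil of quartics $f$ plus multiples of zero; the singular members of the pencil are the three pairs of lines through the four points, matching the three factorizations of $f$ into quadratics. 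Tracking constants should give $-\det(D)\cdot(\text{resolvent})\sim \det(D)\det(xC_0+D)$ modulo squares.

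The main obstacle is this last bookkeeping step: getting the quadratic twist exactly right, namely the constants $-1$, $\det D$, and the leading coefficient of $\det(xC_0+D)$, which is $\det C_0=1$ up to sign. Over $\overline{\mathbb F_q}$ the identification is the Griffiths--Harris/Ionas result, so only the square class of a single constant is at stake. I would first try to pin down that constant by an explicit direct computation with $D$ diagonal, $D=\mathrm{diag}(d_0,d_1,d_2)$. I would then argue that the square class is a continuous, hence constant, invariant on the connected space of transverse pairs, after checking the formula's invariance from the first paragraph.
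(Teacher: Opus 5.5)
Your outline follows the paper's proof step for step. You normalize $\mathcal C$ to $XZ=Y^2$, pass to the quartic model $W^2=-\det(D)\,f$, take its Jacobian via the resolvent cubic, compare with $\det(D)\det(xC+D)$, and finish with the fact that a genus-one curve over $\mathbb F_q$ has a rational point. The gap is that the one step with real content, pinning down the quadratic twist, is not carried out, and the method you propose for it fails.

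The square class in $\mathbb F_q^\times/\mathbb F_q^{\times 2}$ of an algebraic expression in the coefficients is not a ``continuous, hence constant'' invariant on a connected parameter space. Already $\det D$ takes both square classes on the irreducible space of transverse pairs, and the lemma asserts precisely that the twist is by such a non-constant quantity. Likewise, checking the identity for diagonal $D$ with $C=C_0$ fixed only tests it on a special slice; a general $\mathbb F_q$-pair cannot be moved into that shape. To extend the check you would already need to know that the discrepancy between the resolvent model and $\det(xC_0+D)$ is a universal constant, which is exactly what must be proved.

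Two smaller points have the same flavour. First, ``$\det C_0=1$ up to sign'' is the kind of imprecision this lemma cannot tolerate, since $-1$ is a non-square when $q\equiv 3\pmod 4$; in fact $\det C_0=\tfrac14$, a square. Second, the form $y^2=x^3-27Ix-27J$ collapses in characteristic $3$, so only the resolvent-cubic form works for all odd $q$.

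The repair is to do the bookkeeping once, for a general $D$. Write $Q_D=aX^2+bXY+cXZ+dY^2+eYZ+fZ^2$ and $\delta=\det D$, so that $Q_D(u^2,uv,v^2)=au^4+bu^3v+(c+d)u^2v^2+euv^3+fv^4$. The resolvent-cubic (Cremona--Fisher--Stoll) Jacobian of $W^2=-\delta\,Q_D(u^2,uv,v^2)$ is
\[
y^2=x^3-\delta(c+d)x^2+\delta^2(be-4af)x+\delta^3\bigl(4a(c+d)f-ae^2-b^2f\bigr),
\]
while expanding the determinant gives
\[
\det(XC_0+D)=\tfrac14X^3+\tfrac{2c-d}{4}X^2+\tfrac{-4af+be+c^2-2cd}{4}X+\delta .
\]
Substituting $x=\delta(X+c)$ turns the first right-hand side into $4\delta^3\det(XC_0+D)$, using $4\delta=4adf+bce-ae^2-b^2f-c^2d$. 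Then $y=2\delta Y$ gives $Y^2=\delta\det(XC_0+D)$. This is a polynomial identity in the coefficients of $D$, so it holds for every transverse pair with no specialization or continuity argument. Your predicted square class is the correct one, but it has to be established this way.
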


\begin{proof}
The projection $E_1\to \mathcal C$ via $(P, \mathcal L)\mapsto P$ is a 2-to-1 cover ramified at the four points of  $\mathcal{C}\cap \mathcal{D}$.  By Riemann-Hurwitz formula, we know that $E_1$ is a smooth curve with genus $1$. It remains to identify its Jacobian.

Since every smooth conic over $\mathbb F_q$ has a rational point, after a projective change of coordinates over $\mathbb{F}_q$, we may assume that $\mathcal{C}$ is defined by 
$$\mathcal C:\ XZ-Y^2=0.$$
Then $\mathcal C$ is parametrized as $\mathbb{P}^1$ via
$$(u:v)\mapsto (u^2:uv:v^2).$$

Write
$$
D=
\begin{pmatrix}
a&b/2&c/2\\
b/2&d&e/2\\
c/2&e/2&f
\end{pmatrix},
$$
so that
$$Q_D(X,Y,Z)=aX^2+bXY+cXZ+dY^2+eYZ+fZ^2.$$
Let
$$\delta=\det(D).$$
Explicitly,
$$4\delta=4adf+bce-ae^2-b^2f-c^2d.$$

Under the above parametrization of $\mathcal C$, we obtain the binary quartic
$$
f(u,v)=Q_D(u^2,uv,v^2)
=au^4+bu^3v+(c+d)u^2v^2+euv^3+fv^4.
$$
Its zeros are precisely the points of $\mathcal C\cap\mathcal D$.

For $P\notin\mathcal D$, the two tangent lines from $P$ to $\mathcal D$ are
defined over the quadratic extension determined by the discriminant
$-\det(D)Q_D(P)$, up to a nonzero square factor (see
\cite[Lemma~2.4]{Asgarli}). Hence the double cover $E_1\to\mathcal C\simeq
\mathbb P^1$ is represented by the binary quartic model
$$W^2=-\delta f(u,v).$$

We now compute its Jacobian using the $a_i$-invariants of Cremona--Fisher--Stoll \cite{cremona2010minimisation}. Their generalized binary quartic (\cite[Definition~2.2]{cremona2010minimisation}) has the form
$$y^2+P(x,z)y=Q(x,z),$$
where
$$P(x,z)=lx^2+mxz+nz^2$$
and
$$Q(x,z)=Ax^4+Bx^3z+Cx^2z^2+Dxz^3+Ez^4.$$
By \cite[Lemma 2.9, Theorem~2.10]{cremona2010minimisation}, the associated Weierstrass
equation for the Jacobian is
$$y^2+a_1xy+a_3y=x^3+a_2x^2+a_4x+a_6,$$
where
\begin{align*}
a_1&=m,\\
a_2&=C-ln,\\
a_3&=lD+nB,\\
a_4&=-4AE+BD-(l^2E+lnC+n^2A),\\
a_6&=-4ACE+AD^2+B^2E-(l^2CE+m^2AE+n^2AC+lnBD)+lmBE+mnAD.
\end{align*}
In our case $P=0$, so $l=m=n=0$, and
$$
Q(u,v)=-\delta f(u,v).
$$
Thus
$$
A=-\delta a,\quad B=-\delta b,\quad C=-\delta(c+d),\quad D=-\delta e,\quad
E=-\delta f.
$$
Substituting these values into the CFS formulas gives
$$a_1=0,\quad a_3=0,$$
$$a_2=-\delta(c+d),$$
$$a_4=\delta^2(-4af+be),$$
and
\begin{align*}
a_6
&=-4ACE+AD^2+B^2E\\
&=-4(-\delta a)(-\delta(c+d))(-\delta f)
  +(-\delta a)(-\delta e)^2+(-\delta b)^2(-\delta f)\\
&=\delta^3\left(4a(c+d)f-ae^2-b^2f\right).
\end{align*}
Therefore, by \cite[Theorem~2.10]{cremona2010minimisation}, over any field of
odd characteristic, including characteristic $3$, the Jacobian of the binary
quartic model $W^2=-\delta f(u,v)$ is the elliptic curve
\begin{equation}\label{eq:jacobian-weierstrass-model}
y^2=x^3-\delta(c+d)x^2+\delta^2(-4af+be)x
+\delta^3\left(4a(c+d)f-ae^2-b^2f\right).
\end{equation}

It remains to compare this Weierstrass equation with the determinantal cubic
$$Y^2Z=\delta\det(XC+ZD).$$
A direct determinant computation gives
\begin{align*}
\det(XC+ZD)=\frac{1}{4}X^3+\frac{2c-d}{4}X^2Z+\frac{-4af+be+c^2-2cd}{4}XZ^2+\delta Z^3.
\end{align*}
On the affine chart $Z=1$, the determinantal cubic is therefore
$$
Y^2=
\frac{\delta}{4}X^3+\frac{\delta(2c-d)}{4}X^2
+\frac{\delta(-4af+be+c^2-2cd)}{4}X+\delta^2.
$$
We claim that this curve is isomorphic to \eqref{eq:jacobian-weierstrass-model}
by the change of variables
$$x=\delta(X+c),\quad y=2\delta Y.$$
Indeed, multiplying the determinantal cubic equation by $4\delta^2$ gives
$$
(2\delta Y)^2
=
\delta^3\left(X^3+(2c-d)X^2+(-4af+be+c^2-2cd)X+4\delta\right).
$$
On the other hand, substituting $x=\delta(X+c)$ into the right hand side of
\eqref{eq:jacobian-weierstrass-model} gives
\begin{align*}
&\delta^3\Big((X+c)^3-(c+d)(X+c)^2+(-4af+be)(X+c)\\
&\hspace{4.5cm}+4a(c+d)f-ae^2-b^2f\Big).
\end{align*}
Using
$$4\delta=4adf+bce-ae^2-b^2f-c^2d,$$
the expression inside the parentheses simplifies to
$$X^3+(2c-d)X^2+(-4af+be+c^2-2cd)X+4\delta.$$
Thus the two affine equations are identified by
$$x=\delta(X+c),\quad y=2\delta Y.$$
This change of variables is defined over $\mathbb F_q$ because $q$ is odd and
$\delta=\det(D)\neq 0$. Hence the Jacobian of $E_1$ is isomorphic over
$\mathbb F_q$ to the smooth projective model of
$$Y^2Z=\delta\det(XC+ZD).$$

Finally, every genus-one curve over a finite field has a rational point. Hence
$E_1$ is isomorphic over $\mathbb F_q$ to its Jacobian. Therefore $E_1$ is
isomorphic over $\mathbb F_q$ to the smooth projective model of
$$Y^2Z=\det(D)\det(XC+ZD).$$
\end{proof}

We now obtain the corresponding model for $E_2$ by applying the lemma to the
dual conics. Since $\mathcal C^*$ and $\mathcal D^*$ are represented, up to
nonzero scalar, by $C^{-1}$ and $D^{-1}$, the preceding lemma gives the model
$$
y^2=\det(D^{-1})\det(xC^{-1}+D^{-1}).
$$
Using
$$
\det(D^{-1})\det(xC^{-1}+D^{-1})
=(\det C)^{-1}(\det D)^{-2}\det(xD+C),
$$
we see that this curve is isomorphic over $\mathbb F_q$ to
$$y^2=\det(C)\det(xD+C),$$
because $(\det C)^{-1}(\det D)^{-2}$ and $\det(C)$ differ by the square
$(\det C\det D)^{-2}$ in $\mathbb F_q^\times$. Thus $E_2$ can be modeled over
$\mathbb F_q$ by
$$E_2:y^2=\det(C)\det(xD+C).$$

\section{Sage Worksheet}
We also provide a Sage worksheet for the verification of the formula in Theorem \ref{thm: main_conic_case} at \url{https://github.com/TianhaoW/An-Exact-Counting-Formula-for-the-Mutual-Position-of-Two-Plane-Conics}. 

\section*{Acknowledgments}
I would like to thank my advisor, Nathan Kaplan, for suggesting the question considered in this note and for many helpful discussions. I would also like to thank Shamil Asgarli and Chi Hoi Yip for helpful correspondence, and for kindly providing a copy of the paper of Abatangelo, Fisher, Korchmáros, and Larato.

The author used GPT-5.5 for assistance with grammar, spelling, Sage worksheet, and the organization of symbolic computations appearing in the identification of the incidence varieties with the elliptic curve models. The author independently checked the computations and takes full responsibility for the final manuscript.

\newpage
\printbibliography

@article{Slavov,
  title = {Square values of several polynomials over a finite field},
  author = {Slavov, Kaloyan},
  journal = {Finite Fields and Their Applications},
  volume = {109},
  pages = {102696},
  year = {2026},
  issn = {1071-5797},
}

@article{GH,
  title = {On {C}ayley's explicit solution to {P}oncelet's porism},
  author = {Griffiths, Phillip and Harris, Joseph},
  journal = {L'Enseignement Math{\'e}matique},
  series = {2},
  volume = {24},
  number = {1-2},
  pages = {31--40},
  year = {1978},
  publisher = {L'Enseignement Math{\'e}matique}
}

@article{Asgarli,
  title={Mutual position of two smooth quadrics over finite fields},
  author={Asgarli, Shamil and Yip, Chi Hoi},
  journal={Designs, Codes and Cryptography},
  volume={93},
  number={10},
  pages={4461--4472},
  year={2025},
  publisher={Springer}
}

@misc{Ionas,
      title={Elliptic constructions of hyperkaehler metrics {III}: Gravitons and {P}oncelet polygons}, 
      author={Radu A. Ionas},
      year={2007},
      eprint={0712.3601},
      archivePrefix={arXiv},
      primaryClass={math.DG},
}

@article{Korchmros,
  title={Problems and results in {$PG(2, q)$}},
  author={G{\'a}bor Korchm{\'a}ros},
  journal={Electron. Notes Discret. Math.},
  year={2013},
  volume={40},
  pages={181-187},
}

@misc{Mypaper,
      title={Splitting of Polynomial Families via {G}alois Theory}, 
      author={Tianhao Wang},
      year={2026},
      eprint={2606.12810},
      eprinttype={arXiv},
      eprintclass={math.NT},
}

@article{abatangelo2011mutual,
  author    = {Abatangelo, V. and Fisher, J. C. and Korchm{\'a}ros, G. and Larato, B.},
  title     = {On the mutual position of two irreducible conics in $\mathrm{PG}(2,q)$, $q$ odd},
  journal   = {Advances in Geometry},
  volume    = {11},
  number    = {4},
  pages     = {603--614},
  year      = {2011},
  publisher = {De Gruyter},
  doi       = {10.1515/advg.2011.026}
}

@book{harrisag,
  title={Algebraic Geometry: A First Course},
  author={Harris, Joe},
  volume={133},
  series={Graduate Texts in Mathematics},
  year={1995},
  publisher={Springer-Verlag},
  address={New York, NY}
}

@article{cremona2010minimisation,
  title={Minimisation and reduction of 2-, 3- and 4-coverings of elliptic curves},
  author={Cremona, John E and Fisher, Tom A and Stoll, Michael},
  journal={Algebra \& Number Theory},
  volume={4},
  number={6},
  pages={763--820},
  year={2010},
  publisher={Mathematical Sciences Publishers}
}

\end{document}